\renewcommand{\mkbegdispquote}[2]{\itshape}
\def\BibTeX{{\rm B\kern-.05em{\sc i\kern-.025em b}\kern-.08em
    T\kern-.1667em\lower.7ex\hbox{E}\kern-.125emX}}
\newif\ifTODO 
\newcommand{\SDPA}{\textsf{SDPA}\xspace}
\newcommand{\DSDP}{\textsf{DSDP}\xspace}
\newcommand{\PEBBL}{\textsf{PEBBL}\xspace}
\newcommand{\Spectra}{\textsf{Spectra}\xspace}
\newcommand{\field}{\mathbb}
\newcommand{\reals}{\field{R}}
\newcommand{\R}{\reals} 
\newcommand{\abbr}[1][abbrev]{#1.\xspace}
\newcommand{\eg}{\abbr[e.g]}
\newcommand{\ie}{\abbr[i.e]}
\newcommand{\wrt}{\abbr[w.r.t]}
\newcommand{\st}{\mathrm{s.t.}}
\DeclareMathOperator*{\argmin}{argmin}
\DeclarePairedDelimiterX{\innp}[2]{\langle}{\rangle}{#1, #2}
\newtheorem{theorem}{Theorem}
\newtheorem{lemma}[theorem]{Lemma}
\theoremstyle{remark}
\newtheorem{remark}[theorem]{Remark}
\begin{document}

\begin{center}
 {\LARGE QUBO Dual Bounds via SDP Plane Projection Method}
\end{center}

\vspace{8mm}

\textbf{Apostolos Chalkis}\hfill{\ttfamily apostolos.chalkis@quantagonia.com}\\
{\small\emph{Quantagonia}}

\textbf{Thomas Kleinert}\hfill{\ttfamily thomas.kleinert@quantagonia.com}\\
{\small\emph{Quantagonia}}

\textbf{Boro Sofranac}\hfill{\ttfamily boro.sofranac@quantagonia.com}\\
{\small\emph{Quantagonia}}

\vspace{3mm}

\begin{abstract}
  In this paper, we present a new method to solve a certain type of \emph{Semidefinite Programming} (SDP)
  problems. These types of SDPs naturally arise in the \emph{Quadratic Convex Reformulation} (QCR) method and can be
  used to obtain dual bounds of \emph{Quadratic Unconstrained Binary Optimization} (QUBO) problems. QUBO problems have
  recently become the focus of attention in the quantum computing and optimization communities as they are well suited
  to both gate-based and annealing-based quantum computers on one side, and can encompass an exceptional variety of
  combinatorial optimization problems on the other. Our new method can be easily \emph{warm-started}, making it very
  effective when embedded into a \emph{branch-and-bound} scheme and used to solve the QUBO problem to global
  optimality. We test our method in this setting and present computational results showing the effectiveness of our
  approach.
\end{abstract}

\section{Introduction}
\label{sec:intro}

This paper is concerned with the \emph{Quadratic Unconstrained Binary Optimization}~(QUBO) problem
\begin{equation}\label{eq:qubo}
  \max\, \bm{x}^\top Q\bm{x} + \bm{c}^\top \bm{x}, \; \bm{x} \in \{0,1\}^n,
  \tag{QUBO}
\end{equation}
where $Q \in \R^{n \times n}$ is a symmetric matrix and $\bm{c} \in \R^n$ is the linear term.
QUBO problems are known to be $\mathcal{NP}$-hard \cite{Barahona1982}.
An exceptional variety of combinatorial optimization problems can be transformed to the QUBO format, see, \eg,
\cite{Kochenberger2014,Kochenberger2006,Lucas2014} for an exhaustive number of examples.
QUBOs are currently the most widely adopted optimization model used to access quantum computers, and as such they receive significant attention of researchers both on the classical and quantum side. For a detailed discussion on the connection between QUBOs and Quantum Computing we refer the interested reader to~\cite{Glover2022}.

Most approaches to solve the QUBO problem to global optimality rely on the \emph{branch and bound} algorithm
\cite{Land1960}. The main idea of this algorithm is to split the original problem into subproblems which are easier to
solve (branching).
By repeatedly branching on the subproblems, a search tree is obtained.
At each subproblem, \emph{primal} and \emph{dual} bounds are computed.
Primal bounds correspond to objective values of \emph{feasible} solutions and in general they are computed by classic or quantum heuristics.
In contrast, dual bounds bound the objective of the \emph{optimal} solution from above (below in case of minimization).
The rationale is that the bounds lead to detecting and pruning suboptimal nodes of the tree that cannot contain improving solutions.
This is the case if the best known feasible solution has a better objective value than the bound of the
subproblem at hand.
This way, the algorithm tries to avoid having to enumerate exponentially many subproblems.

Branching typically involves splitting a domain of one variable into two subdomains. Becuase of this, subproblems close to each other in the search tree tend to be very similar. Consequently, \emph{warmstarting} plays an extremly important role in the solver used to obtain dual bounds. Warmstarting refers to the methods' ability to use the solution of one problem as a starting point for the solving of a different but similar problem. To illustrate the importance of warmstarting in branch and bounds algorithms, we note that the capability of the \emph{Dual Simplex} algorithm to be warmstarted is one of the main reason why it is still a de-facto standard method used to obtain dual bounds in MIP problems \cite{Achterberg2009}. In contrast, inherent challenges in effectively warmstarting the IPM method~\cite{Engau2011} have limited its applicability in branch and bound algorithms, in spite of its superior worst-case runtime guarantees.

In this paper, we will use the \emph{Quadratic Convex Reformulation}~(QCR) method
\cite{Hammer1970} to compute dual bounds of QUBO problems.
We will give more details on the QCR and how it bounds the QUBO problem in Section~\ref{sec:preliminaries}.
In its heart, the QCR method involves solving a certain type of a \emph{Semidefinite Programming}~(SDP) problem.
Existing QUBO bounding approaches based on the QCR method all solve the related SDP problem by off-the-shelf SDP
approaches \cite{Billionnet2006,Natarajan:2013,Billionnet2009,Billionnet2010,Krner1988,Shor1988-qt,Poljak1995-dk}. 
To the best of our knowledge, in this work we provide the first approach which exploits the special structure of this
SDP and develops a tailored method to solve it.
One of the main benefits of the proposed method is that it is well suited for \emph{warmstarting}.

\subsection{Related Work}
\label{sec:related_work}

In the literature, several streams related to computing dual bounds for QUBOs exist.
Apart from the QCR method, several authors, including \cite{Krner1988,Poljak1995-dk,Shor1988-qt} among others,
consider the ``classical'' SDP relaxation of QUBO. In fact, this SDP relaxation is related to the SDP occurring in the
QCR method, see Section~\ref{sec:preliminaries}. Another class of methods exploits a concept known as \emph{roof
  duality} to derive QUBO bounds, see \cite{Hammer-et-al:1984,Boros-et-al:2008,Mauri-Lorena:2011}. Stemming from the
equivalence between the QUBO and the well-known MaxCut problem \cite{Barahona-et-al:1989}, all methods used to bound
MaxCut problem can be used to bound QUBOs, \eg, \cite{Rendl-et-al:2010,Rehfeldt-et-al:2023,Rendl:1999,Poljak-Rendl:1995,Gusmeroli-et-al:2022,Krislock-et-al:2017,Hrga-Povh:2021}.

In terms of solving SDP problems, the current approaches can in general be divided into those based on the
\emph{interior point method},
see~\cite{Nesterov-Nemirovsky:1992,Nesterov-Nemirovsky:1994,Anstreicher:2000,Lee-Song:2020}, and those based on the
\emph{cutting plane method},
see~\cite{Shor:1977,Khachiyan:80,Khachiyan-Tarasov:1988,Vaidya:1996,Krishnan:2002,Lee-Wong:2015,Lee-Song:2020b}.

\subsection{Contribution}
\label{sec:contribution}

The contributions of this paper are as follows. We present a new method that can solve SDPs coming out the QCR method used to bound QUBOs. Our method consists of two parts: i) the transformation of the SDP problem into an equivalent constrained convex problem and ii) the solving of the transformed problem. For the first part, we present the necessary transformations and show how the two problems are connected in Section \ref{sub:qubo_plane_projection}. These transformations exploit the fact that the feasible region of the SDP in question is unbounded in the direction oposite the objective function. For the second part, in Section \ref{sub:fw_method}, we propose an algorithm tailored to our goal of solving the QUBO problem to global optimality in a branch-and-bound scheme. What makes this algorithm effective in the branch-and-bound context is that it can be easily warmstarted. 

Additionally, in Section \ref{sec:function_oracles}, we present \emph{function oracles} which can be used as building blocks to construct optimization algorithms over the transformed problem. We show how the special structure of the SDP used to bound QUBOs can be exploited to obtain efficient implementations of oracles for computing boundary points of the feasible set, evaluating the objective function at a given point and computing its gradient direction. Lastly, we present computational experiments demonstrating the effectiveness of our approach.

The rest of this paper is organized as follows. Section \ref{sec:preliminaries} gives a brief overview of the QCR method and how QUBO bounds can be extracted from it. Section \ref{sec:sdp} presents our method to solve the SDP resulting from the QCR method in the context of solving QUBOs. Lastly, in Section \ref{computational_study}, we present the computational experiemnts. 

\section{The QCR method and dual bounds for QUBOs}
\label{sec:preliminaries}

A QUBO problem is in general non-linear and non-convex. However, it is possible to reformulate it to an
equivalent convex problem. This was first observed by Hammer and Rubin \cite{Hammer1970} and later extended by
Billionnet et al.~\cite{Billionnet2006,Billionnet2009,Billionnet2010}. The resulting method is called \emph{Quadratic Convex Reformulation}. In this Section, we will provide a brief overview of this method
and how dual bounds for QUBO problems can be extracted from it.

For a given Problem~\eqref{eq:qubo}, the QCR method is based on computing a shift vector $\bm{u}\in\R^n$ such that $Q - \text{diag}\{\bm{u}\} \preceq 0$, where $\text{diag}\{\bm{u}\}$ is the diagonal matrix obtained from the vector $\bm{u}$. Then, since $x_i$ is binary, $\bm{x}^T\text{diag}\{\bm{u}\}\bm{x} = \bm{u}^T\bm{x}$. Thus, the original problem can be equivalently written as
\begin{equation}
  \label{eq:extqubo_qcr}
  \max
  \quad
  g_{\bm{u}}(\bm{x}) = \bm{x}^T(Q-\text{diag}\{\bm{u}\})\bm{x} + (\bm{c}+\bm{u})^T\bm{x}
  \quad
  \st
  \quad
  \bm{x} \in \{0,1\}^n,
\end{equation}
which is a convex QUBO, \ie, its continuous relaxation is convex.
For a given shift~$\bm{u}$ the QUBO dual bound~$\beta_{\bm{u}}$ is obtained by solving the continuous relaxation of Problem~\eqref{eq:extqubo_qcr}, \ie, $\beta_u = \max g_{\bm{u}}(\bm{x})$ subject to $\bm{x} \in [0,1]^n$.
In theory, one could use any perturbation vector $\bm{u}$ that fulfils $Q - \text{diag}\{\bm{u}\} \preceq 0$
to obtain a valid QUBO dual bound.

Billionnet et al.~\cite{Billionnet2006} present a method to compute an \emph{optimal}~$\bm{u^{\ast}}$ that leads to the tightest dual bound $\beta_{\bm{u}^{\ast}}$.
The vector~$\bm{u^{\ast}}$ is part of an optimal solution of the SDP as follows:
\begin{equation}
  \label{eq:original_sdp}
  r^* = \min_{r,\bm{u}}
  \quad
  r
  \quad
  \st
  \quad
  \begin{bmatrix}
    r  & -(\bm{c} + \bm{u})^T / 2 \\
    -(\bm{c} + \bm{u}) / 2 & \text{diag}(\bm{u}) - Q
  \end{bmatrix}
  \succeq 0.
\end{equation}

The authors of \cite{Billionnet2006} also show that $\beta_{\bm{u^{\ast}}} = r^*$ holds at optimality, which means that once we solve the SDP~\eqref{eq:original_sdp}, we directly obtain the tightest QUBO dual bound and do not have to solve the continuous relaxtion of the convexified QUBO~\eqref{eq:extqubo_qcr}.
Additionally, observe that any \emph{feasible} solution of Problem~\eqref{eq:original_sdp} leads to a valid convexification shift and hence a valid bound.
This can also be seen from the fact that the SDP~\eqref{eq:original_sdp} is the dual problem of the classical SDP relaxation of QUBO, see \cite{Billionnet2006}. Thus, the objective values of all feasible solutions to Problem~\eqref{eq:original_sdp} are valid bounds.

The remainder of the paper will now focus on solving the SDP~\eqref{eq:original_sdp}.

\section{Transforming the SDP relaxation using a plane projection}
\label{sec:sdp}

As discussed in Section \ref{sec:preliminaries}, solving the SDP~\eqref{eq:original_sdp} leads to our desired QUBO bound. In this Section, we present a new method to solve this SDP. The method consists of two parts: i) we will transform the SDP problem to a constrained convex problem and show how the optimum of the second problem directly reveals the optimum of the first. We will refer to this part of the method as the \emph{plane projection} as it will involve orthogonal projections of points from the feasible region of the SDP onto a certain hyperplane. ii) We will show how to solve the transformed problem, effectively obtaining a bound to the QUBO problem. We will denote the convex objective function of the transformed problem as $f$ (formally defined in~\eqref{eq:f_definition}). The intersection of the above-mentioned hyperplane with the feasible region of the SDP \eqref{eq:original_sdp} will form the domain of $f$.

In Section \ref{sub:qubo_plane_projection} we present the plane projection method for the SDP~\eqref{eq:original_sdp}. Then, we derive efficient computational oracles to compute the value and the gradient of $f$, as well as an oracle to compute the intersection between a ray and the boundary of the domain of $f$, as presented in Section \ref{sec:function_oracles}. Section \ref{sub:feasible_sdp_point} shows how to compute a starting feasible point for our method. These algebraic operations are necessary tools to develop a practical method to solve the transformed problem, presented in Section \ref{sub:fw_method}.

\subsection{The plane projection method for bounding a QUBO}
\label{sub:qubo_plane_projection}

The feasible region of the SDP in \eqref{eq:original_sdp} is given by the following Linear Matrix Inequality (LMI):

\begin{equation}
\label{eq:LMI}
\mathcal{S}:= \{ \bm{y} \in \mathbb{R}^{n+1}\ |\ F(\bm{y}) = A_0 + A_1 y_1 + \dots + A_{n+1} y_{n+1} \succeq 0\}  ,
\end{equation}
where
\begin{equation}
\begin{split}
& A_0 = \begin{bmatrix} 0 & -\bm{c}^T/2 \\ -\bm{c}/2 & -Q \end{bmatrix},\ A_{n+1} = \begin{bmatrix} 1 & \bm{0}_n^T \\ \bm{0}_n & 0_{n\times n} \end{bmatrix} , \\ & A_i \in\mathbb{S}^{n\times n}:\ A_i(1, i+1) = A_i(i+1, 1) = -\frac{1}{2},\ A(i+1,i+1) =1 ,\ i=1,\dots ,n .
\end{split}
\end{equation}

Note that $\bm{y}$ has one additional coordinate compared to $\bm{u}$, and the coordinates that correspond to $\bm{u}$ are exactly the same, i.e., $\bm{y} = [\bm{u}, r]$. Last, note that the objective vector of the SDP is the standard orthonormal vector $e_{n+1}$.

We will consider a hyperplane $\mathcal{H}$ which is perpendicular to $e_{n+1}$ and cuts the feasible region $\mathcal{S}$, defining an intersection with a positive volume. Then, we will define a convex function over this intersection whose optimal solution will directly lead to the optimal solution of the SDP \eqref{eq:original_sdp}. Since the projection onto $\mathcal{H}$ is trivial, to improve the presentation of our method, we define the convex problem as an $n$-dimensional problem instead of $(n+1)$-dimensional.

In particular, let $\mathcal{H} := \{[\bm{u},\hat{r}]\},\ \forall \bm{u}\in\R^n$ be a hyperplane with $\hat{r} \in \R$ such that $\mathcal{H}$ defines a non-empty intersection with $\mathcal{S}$: $\text{Vol}_{n}(\mathcal{H} \cap \mathcal{S}) >0$. For convencience, we denote the intersection as $\mathcal{D} = \mathcal{H} \cap \mathcal{S}$.

Let us now define the function
 $f_{\hat{r}}(\bm{u}): \mathcal{D} \rightarrow \R$ as:
\begin{equation}
\label{eq:f_definition}
f_{\hat{r}}(\bm{u}) = \inf \{ t \in \mathbb{R}\ |\ F([\bm{u},\hat{r}] + t e_{n+1}) \succeq 0 \}
\end{equation}

Intuitively speaking, for a given point $\bm{u} \in \mathcal{D}$, $f_{\hat{r}}(\bm{u})$ gives the negative of the algebraic value of the length of the line segment from $\mathcal{D}$ to the boundary of the feasible region $\mathcal{S}$, in the direction of the negative objective $-e_{n+1}$. Observe that the graph $\{\bm{u}, f_{\hat{r}}(\bm{u}) \ | \ \bm{u} \in \mathcal{D} \}$ is exactly the boundary of the feasible region $\mathcal{S}$ of the SDP below the hyperplane $\mathcal{H}$. Additionally, notice that we invert the sign of length in~\eqref{eq:f_definition}, i.e., $f_{\hat{r}}(\bm{u}) \le 0$ for all $\bm{u} \in \mathcal{D}$. We do this for the convenience of working with a convex, rather than a concave function, see Lemma \ref{plane_opt_is_convex}.

We can find the optimal solution $\bm{u}^*$ of $f_{\hat{r}}$ (i.e., point $\bm{u}^*$ with the maximum length of the line segment to the boundary $\mathcal{S}$, see above) by solving the following optimization problem:
\begin{equation}
  \label{eq:plane_opt_problem}
  \begin{split}
    \min &\quad f_{\hat{r}}(\bm{u}), \\
    \st &\quad
          \begin{bmatrix}
            \hat{r} & -(\bm{c} + \bm{u})^T / 2\\
            -(\bm{c} + \bm{u}) / 2 & diag(\bm{u}) - Q
          \end{bmatrix}
          \succeq 0 .
  \end{split}
\end{equation}
Observe that in \eqref{eq:plane_opt_problem}, the value $\hat{r}$ is fixed. Additionally, notice that the constraints of \eqref{eq:plane_opt_problem} exactly define $\mathcal{D}$, the domain of $f_{\hat{r}}(\bm{u})$. Now we show how the optimal solutions between the SDP and the problem in \eqref{eq:plane_opt_problem} are connected.
\begin{theorem}
Let $\hat{r}$, $f_{\hat{r}}$, and $\mathcal{D}$ be defined as in \eqref{eq:f_definition}. Then, $[\bm{u}^*, \hat{r} + f_{\hat{r}}(\bm{u}^*)] \in \R^{n+1}$ is an optimal solution of the SDP \eqref{eq:original_sdp} iff $\bm{u}^*$ is an optimal solution of the optimization problem \eqref{eq:plane_opt_problem}.
\end{theorem}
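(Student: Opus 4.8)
The plan is to show that the map $\bm{u}\mapsto [\bm{u},\hat r+f_{\hat r}(\bm u)]$ is a bijection between $\mathcal D$ and the portion of the boundary $\partial\mathcal S$ lying below $\mathcal H$, and that this map is monotone with respect to the SDP objective $e_{n+1}$. Concretely, I would proceed in the following steps.

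First I would establish that $f_{\hat r}(\bm u)$ is well-defined (finite) for every $\bm u\in\mathcal D$. This uses the structural fact emphasised in the excerpt: $A_{n+1}=e_{n+1}e_{n+1}^\top\succeq 0$, so $F(\bm y+te_{n+1})=F(\bm y)+t\,A_{n+1}$ is nondecreasing in $t$ in the Loewner order; hence once it is PSD at $t=0$ (which holds since $[\bm u,\hat r]\in\mathcal S$), it stays PSD for all larger $t$, so the infimum in \eqref{eq:f_definition} is over a nonempty set bounded below, and the value is $\le 0$. One should also note the infimum is attained (the PSD cone is closed), so $F([\bm u,\hat r]+f_{\hat r}(\bm u)e_{n+1})\succeq 0$, i.e. the point $[\bm u,\hat r+f_{\hat r}(\bm u)]$ is genuinely in $\mathcal S$.

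Next, the core of the argument: for a fixed $\bm u\in\mathcal D$, the value $r\mapsto F([\bm u,r])$ is monotone nondecreasing in $r$, so $[\bm u,r]\in\mathcal S$ iff $r\ge \hat r+f_{\hat r}(\bm u)$; that is, $\hat r+f_{\hat r}(\bm u)=\min\{r : [\bm u,r]\in\mathcal S\}$. Therefore the SDP objective restricted to the fibre over $\bm u$ is minimised precisely at $r=\hat r+f_{\hat r}(\bm u)$. Consequently
\[
r^* \;=\; \min_{[\bm u,r]\in\mathcal S} r \;=\; \min_{\bm u\in\pi(\mathcal S)}\bigl(\hat r+f_{\hat r}(\bm u)\bigr),
\]
where $\pi$ is the projection dropping the last coordinate. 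Here one must check that the relevant minimiser has its $\bm u$-component in $\mathcal D=\mathcal H\cap\mathcal S$ rather than merely in $\pi(\mathcal S)$: since $\mathcal H$ is chosen with $\mathrm{Vol}_n(\mathcal D)>0$ and $f_{\hat r}\le 0$, every $\bm u$ admitting some feasible $[\bm u,r]$ with $r<\hat r$ already lies in $\mathcal D$ (the vertical segment from $[\bm u,r]$ up to $[\bm u,\hat r]$ stays in the convex set $\mathcal S$), so restricting to $\mathcal D$ loses no optimal solutions; and $\hat r+f_{\hat r}(\bm u)\le\hat r$ there, so the outer minimum is the same whether taken over $\mathcal D$ or $\pi(\mathcal S)$. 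This identifies $r^* = \hat r + f_{\hat r}(\bm u^*)$ with $\bm u^*$ an argmin of $f_{\hat r}$ over $\mathcal D$, and conversely reconstructs an SDP optimum from any such $\bm u^*$. Combining with the attainment noted above gives the ``iff''.

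Finally I would assemble the two directions explicitly. If $\bm u^*$ solves \eqref{eq:plane_opt_problem}, then $[\bm u^*,\hat r+f_{\hat r}(\bm u^*)]\in\mathcal S$ with objective value $\hat r+f_{\hat r}(\bm u^*)=r^*$, so it is SDP-optimal. Conversely, if $[\bm u^*,r^*]$ is SDP-optimal, then $\bm u^*\in\mathcal D$ by the containment argument, and $r^*\ge\hat r+f_{\hat r}(\bm u^*)\ge \min_{\bm u\in\mathcal D}(\hat r+f_{\hat r}(\bm u))=r^*$, forcing $r^*=\hat r+f_{\hat r}(\bm u^*)$ and $\bm u^*$ to be a minimiser of $f_{\hat r}$. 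I expect the main obstacle to be the bookkeeping in the previous paragraph — namely verifying cleanly that no optimal solution escapes the slice $\mathcal D$ and that the convexity/monotonicity of $F(\bm y+t A_{n+1})$ is applied correctly at the boundary (attainment of the infimum, and the possibility that $\mathcal D$ is unbounded) — rather than any deep step; everything else is a direct consequence of Loewner-monotonicity in the $e_{n+1}$ direction.
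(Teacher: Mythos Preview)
Your proposal is correct and follows essentially the same route as the paper: both arguments hinge on the Loewner monotonicity of $F([\bm u,r])$ in $r$ (equivalently, that increasing the single diagonal entry $r$ preserves positive semidefiniteness), use it to show that the $\bm u$-component of any SDP optimum lands in $\mathcal D$, and then compare objective values fibre-wise to obtain the correspondence.

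The only notable difference is one of care rather than strategy. You explicitly verify well-definedness and attainment of the infimum in \eqref{eq:f_definition} and phrase the core step as the identity $\hat r+f_{\hat r}(\bm u)=\min\{r:[\bm u,r]\in\mathcal S\}$ before taking the outer minimum, whereas the paper argues more informally via a direct contradiction (``if some $\bm u$ had $f_{\hat r}(\bm u)<f_{\hat r}(\bm u^*)$ then $\hat r+f_{\hat r}(\bm u)<r^*$''). Your version also flags the distinction between $\mathcal D$ and $\pi(\mathcal S)$, which the paper leaves implicit. Both rely on the assumption $\mathrm{Vol}_n(\mathcal D)>0$ to ensure $\hat r>r^*$, so that the optimal fibre is actually captured by the slice; you should state this inequality explicitly when you write the proof out, since your containment argument (``every $\bm u$ admitting $[\bm u,r]\in\mathcal S$ with $r<\hat r$ lies in $\mathcal D$'') needs $r^*<\hat r$ to apply to the SDP optimum.
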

\begin{proof}
Let the optimal solution of the SDP \eqref{eq:original_sdp} be $[\bm{u}^*, r^*] \in \partial\mathcal{S}$ (optimal solutions always lie on the boundary of $\mathcal{S}$). Clearly, $\hat{r} > r^*$ must hold by definition (so that $\mathcal{H}$ has a non-empty intersection with $\mathcal{S}$, see the definition of \eqref{eq:f_definition}). First, we will prove that $[\bm{u}^*, \hat{r}] \in \mathcal{D}$. $\hat{r}$ is already in $\mathcal{H}$ by definition, so we have to show that $\bm{u}^*$ lies inside $\mathcal{S}$. Since $diag(\bm{u}^*) - Q \succeq 0$ and  $\hat{r} > r^*$, we have that
$$
\begin{bmatrix} \hat{r} & -(\bm{c} + \bm{u}^*)^T / 2\\ -(\bm{c} + \bm{u}^*) / 2 & diag(\bm{u}^*) - Q \end{bmatrix} \succeq 0 ,
$$
as increasing a single diagonal element of a PSD matrix will again yield a PSD matrix, and thus, $[\bm{u}^*, \hat{r}]\in\mathcal{D}$. In fact, this shows that the feasible region $\mathcal{S}$ of the SDP \eqref{eq:original_sdp} is unbounded in the direction opposite the objective vector $e_{n+1}$, as the claim holds for any $\hat{r} > r^*$.  This allows for a intuitive geometrical interpretation of the proof, see Figure \ref{fig:sdp}. For our method, this also shows that any value of $\hat{r}$ that leads to a non-empty intersection $\mathcal{D}$ suffices for the theorem to hold.

Second, we will prove that $\bm{u}^*$ is the optimum of \eqref{eq:plane_opt_problem}. We already showed that the projection of the optimum of \eqref{eq:original_sdp} along $e_{n+1}$ onto $\mathcal{H}$ lies in $\mathcal{D}$. To finish the proof, we need to show that there does not exist any other $u \in \mathcal{D}$ such that $f_{\hat{r}}(\bm{u}) < f_{\hat{r}}(\bm{u}^*)$, which we will do by contradition. Recall that by definition, for any point $\bm{u} \in \mathcal{D}$, the point $[\bm{u}, \hat{r} + f_{\hat{r}}(\bm{u})]$ lies on the boundary $\partial \mathcal{S}$  below the plane $\mathcal{H}$, and as the objective vector os the SDP \eqref{eq:original_sdp} is $e_{n+1}$, the $n+1$-th coordinate of this point is the objective value of SDP \eqref{eq:original_sdp}. Let there exist a $\bm{u} \in \mathcal{D}$ with $f_{\hat{r}}(\bm{u}) < f_{\hat{r}}(\bm{u}^*)$. Then, $\hat{r} + f_{\hat{r}}(\bm{u}) < \hat{r} + f_{\hat{r}}(\bm{u}^*) = r^*$. This is a contradiction because $r^*$ is the optimum of the SDP \eqref{eq:original_sdp} and there does not exist any $r = \hat{r} + f_{\hat{r}}(\bm{u})$ with a better value. The proof for the other direction, i.e. if $\bm{u}^* \in\mathcal{D}$ is an optimal solution of the problem in \eqref{eq:plane_opt_problem} then the $[\bm{u}^*, \hat{r} + f_{\hat{r}}(\bm{u}^*)] \in \R^{n+1}$ is an optimal solution of the SDP  \eqref{eq:original_sdp}, can easily be verified with the same argument.

\end{proof}

\begin{SCfigure}
\centering

\begin{tikzpicture}
    \draw (0, 0) node[inner sep=0] (a) {\includegraphics[width=0.55\textwidth]{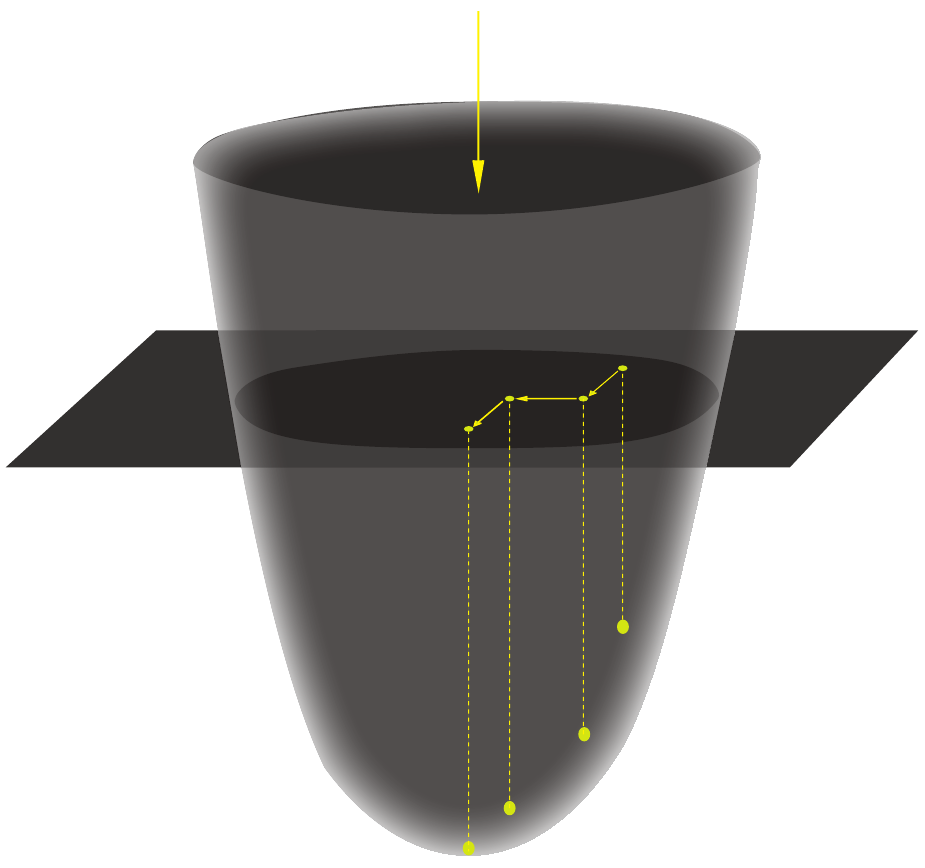}};
    \draw (1, 1) node[below left = -5.9cm and -6.2cm of a] {\textcolor{white}{$\mathcal{S}$}};
    \draw (2, 2) node[below left = -4.5cm and -3cm of a] {\textcolor{white}{$\mathcal{D}$}};
    \draw (3, 3) node[below left = -4.2cm and -1.3cm of a] {\textcolor{white}{$\mathcal{H}$}};
\end{tikzpicture}

\caption{Depiction of the feasible region $\mathcal{S}$ of the SDP \eqref{eq:original_sdp} (unbounded in the direction opposite the objective funciton), the intersection $\mathcal{D} = \mathcal{S} \cap \mathcal{H}$, where the hyperplane $\mathcal{H}$ is perpendicular to the objective vector. The set $\mathcal{D}$ is the domain of the function $f_{\hat{r}}(\cdot)$ in \eqref{eq:f_definition}. The value of $f_{\hat{r}}(\cdot)$ at a point on $\mathcal{D}$ is the negative of the algebraic value of the length of the line segment from $\mathcal{D}$ to the boundary of the feasible region $\mathcal{S}$, in the direction of the objective vector, as illustrated in the figure. The part of the boundary of $\mathcal{S}$ below $\mathcal{D}$ corresponds to the graph of $f_{\hat{r}}(\cdot)$.}
\label{fig:sdp}
\end{SCfigure}

\begin{lemma}
\label{plane_opt_is_convex}
The problem \eqref{eq:plane_opt_problem} is a convex optimization problem.
\end{lemma}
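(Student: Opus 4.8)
The plan is to check the two defining properties of a convex program separately: convexity of the feasible set of \eqref{eq:plane_opt_problem}, and convexity of the objective $f_{\hat r}$ on that set. Throughout I would write $M(\bm u) := F([\bm u,\hat r])$ for the affine matrix-valued map induced by \eqref{eq:LMI}, and let $e_1 \in \R^{n+1}$ denote the first standard basis vector, so that $A_{n+1} = e_1 e_1^\top$ and hence $F([\bm u,\hat r] + t\,e_{n+1}) = M(\bm u) + t\,e_1 e_1^\top$; that is, the parameter $t$ in \eqref{eq:f_definition} only perturbs the top-left entry of $M(\bm u)$, and one checks directly that $M(\bm u)$ is exactly the matrix displayed in \eqref{eq:plane_opt_problem}.

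Convexity of the feasible set is then immediate: the constraint in \eqref{eq:plane_opt_problem} is precisely $M(\bm u)\succeq 0$, so the feasible set is $\mathcal D = \{\bm u \in \R^n : M(\bm u)\succeq 0\}$, the preimage of the positive semidefinite cone under an affine map — equivalently, the intersection $\mathcal H \cap \mathcal S$ of two convex sets.

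For the objective I would argue through the epigraph, and the key step is the characterization that, for every fixed $\bm u \in \mathcal D$,
\[
  \{\, t \in \R \ :\ M(\bm u) + t\,e_1 e_1^\top \succeq 0 \,\} \;=\; \bigl[\, f_{\hat r}(\bm u),\, \infty \,\bigr),
\]
with a finite left endpoint. This holds because the set on the left is closed (the PSD cone is closed and $t \mapsto M(\bm u) + t\,e_1 e_1^\top$ is affine), upward closed (for $t' \ge t$ one has $M(\bm u) + t'e_1 e_1^\top = (M(\bm u)+t\,e_1 e_1^\top) + (t'-t)\,e_1 e_1^\top$, a sum of PSD matrices), nonempty (it contains $t=0$ since $\bm u \in \mathcal D$), and bounded below (evaluating the quadratic form at $e_1$ gives $\hat r + t$, which is negative for $t$ sufficiently small); its infimum is $f_{\hat r}(\bm u)$ by \eqref{eq:f_definition}. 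Consequently the epigraph of $f_{\hat r}$ over its domain satisfies
\[
  \{(\bm u, t) : \bm u \in \mathcal D,\ t \ge f_{\hat r}(\bm u)\} \;=\; \{(\bm u, t) : M(\bm u)\succeq 0 \ \text{and}\ M(\bm u) + t\,e_1 e_1^\top \succeq 0\},
\]
an intersection of two preimages of the PSD cone under affine maps of $(\bm u, t)$, hence convex; since $\mathcal D$ is convex, this is exactly the statement that $f_{\hat r} : \mathcal D \to \R$ is a convex function. Combining this with convexity of $\mathcal D$, problem \eqref{eq:plane_opt_problem} minimizes a convex function over a convex set and is therefore convex.

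The only real work is the displayed characterization — establishing that the relevant set of $t$'s is a genuine closed half-line (nonemptiness, closedness, upward-closedness, finiteness of the infimum), which is what lets one pass from the pointwise ``$\inf$'' in \eqref{eq:f_definition} to a single jointly affine LMI description of the epigraph. An alternative would be to eliminate $t$ by a Schur complement, writing $f_{\hat r}(\bm u) = -\hat r + \tfrac14 (\bm c+\bm u)^\top(\mathrm{diag}(\bm u)-Q)^{-1}(\bm c+\bm u)$ on the relative interior and invoking joint convexity of the matrix-fractional function precomposed with affine maps; but that route needs separate care on the part of $\mathcal D$ where $\mathrm{diag}(\bm u)-Q$ is singular, so the epigraph argument is the cleaner one.
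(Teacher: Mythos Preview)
Your proof is correct. Both you and the paper reduce the claim to the convexity of the spectrahedron $\mathcal S$ (equivalently, of LMI-defined sets), but the executions differ. The paper verifies Jensen's inequality directly: given $\bm y_1,\bm y_2\in\mathcal D$, it notes that the convex combination of the two boundary points $[\bm y_i,\hat r+f_{\hat r}(\bm y_i)]$ lies in $\mathcal S$ and then compares the last coordinate of this point with $\hat r + f_{\hat r}(z\bm y_1+(1-z)\bm y_2)$, which corresponds to the boundary. Your route instead identifies the epigraph of $f_{\hat r}$ over $\mathcal D$ as $\{(\bm u,t): M(\bm u)\succeq 0,\ M(\bm u)+t\,e_1e_1^\top\succeq 0\}$ and reads off convexity from the fact that this is an intersection of two LMI preimages. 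The epigraph argument is a bit more systematic: it simultaneously establishes that $f_{\hat r}$ is finite-valued on $\mathcal D$ (via your upward-closed, closed, nonempty, bounded-below checks), whereas the paper's geometric argument implicitly uses this. The paper's version, on the other hand, keeps the picture of Figure~\ref{fig:sdp} front and center and avoids any appeal to the epigraph characterization of convexity. Your closing remark about the Schur-complement alternative is also on point and not mentioned in the paper.
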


\begin{proof}
 As the feasible set $\mathcal{S}$ of the SDP \eqref{eq:original_sdp} is convex, $\mathcal{D}$, an intersection of $\mathcal{S}$ with a hyperplane is trivially convex as well. As $\mathcal{D}$ is the feasible set of our problem \eqref{eq:plane_opt_problem}, we have left to show that $f_{\hat{r}}(\bm{u})$ is a convex function.  By definition, the part of the boundary of $\mathcal{S}$ below the plane $\mathcal{H}$  corresponds to the graph of $f_{\hat{r}}$. Let $[\bm{y}_1, \hat{r}], \ [\bm{y}_2, \hat{r}]
 \in \mathcal{D}$. Observe that we use the $(n+1)$-dimensional notation for $\mathcal{D}$ for the purpose of the proof here. Then, since $\mathcal{S}$ is a convex region, for all $0\leq z \leq 1$, it holds that
\begin{equation}
z [\bm{y}_1, \hat{r}] + ( 1 - z ) [\bm{y}_2, \hat{r}] \in \mathcal{D} \subset \mathcal{S}.
\end{equation}
Since for any point $[\bm{y}, \hat{r}] \in \mathcal{D}$, the point $[\bm{y}, \hat{r} + f_{\hat{r}}( \bm{y} )]$ corresponds to the point on the boundary $\partial \mathcal{S}$, see definition of \eqref{eq:f_definition}, for the convex combination of the two boundary points, we have,
\begin{equation}
\begin{split}
& z ([\bm{y}_1, \hat{r}] + f_{\hat{r}}( \bm{y}_1 )\bm{e}_{n+1}) + ( 1 - z ) ([\bm{y}_2, \hat{r}] + f_{\hat{r}}( \bm{y}_2 )\bm{e}_{n+1}) = \\ & (z [\bm{y}_1, \hat{r}] + ( 1 - z ) [\bm{y}_2, \hat{r}]) + (z f_{\hat{r}}( \bm{y}_1 ) + ( 1 - z ) f_{\hat{r}} ( \bm{y}_2 ))\bm{e}_{n+1} \in \mathcal{S},
\end{split}
\end{equation}
which follows directly from the convexity of $\mathcal{S}$. Thus, the point,
\begin{equation}\label{eq:proof_point_convex_combination}
\begin{split}
& z [\bm{y}_1, \hat{r} + f_{\hat{r}}( \bm{y}_1 )] + ( 1 - z ) [\bm{y}_2, \hat{r} + f_{\hat{r}} ( \bm{y}_2 )] = \\ & [z \bm{y}_1 + (1-z)\bm{y}_2, \hat{r} + z f_{\hat{r}}(\bm{y}_1) + (1-z)f_{\hat{r}}(\bm{y}_2)],
\end{split}
\end{equation}
belongs to the interior of $\mathcal{S}$.

By definition of \eqref{eq:f_definition}, the point
\begin{equation}\label{eq:proof_point_boundary}
[z \bm{y}_1 + (1-z)\bm{y}_2, \hat{r} + f_{\hat{r}}(z [\bm{y}_1, \hat{r}] + ( 1 - z ) [\bm{y}_2, \hat{r}])] ,
\end{equation}
lies on the boundary $\partial \mathcal{S}$. Clearly, the points $[z \bm{y}_1 + (1-z)\bm{y}_2, \hat{r}]$ and those of \eqref{eq:proof_point_convex_combination} and \eqref{eq:proof_point_boundary} lie on the line $[z \bm{y}_1 + (1-z)\bm{y}_2, \hat{r}] + te_{n+1} \in \R^{n+1}$. Because of the convexity of $\mathcal{S}$ and, since
\begin{equation}
(z [\bm{y}_1, \hat{r}] + ( 1 - z ) [\bm{y}_2, \hat{r}]) + f_{\hat{r}}(z [\bm{y}_1, \hat{r}] + ( 1 - z ) [\bm{y}_2, \hat{r}])\bm{e}_{n+1}
\end{equation}
lies on the boundary $ \partial \mathcal{S}$, we have:
\begin{equation}
f_{\hat{r}}(z [\bm{y}_1, \hat{r}] + ( 1 - z ) [\bm{y}_2, \hat{r}]) \leq z f_{\hat{r}}( \bm{y}_1 ) + ( 1 - z ) f_{\hat{r}} ( \bm{y}_2 ) .
\end{equation}
\end{proof}

\subsection{Function oracles}
\label{sec:function_oracles}

We provide three algorithms, called \textit{function oracles}: (i) an algorithm to compute the value of function $f_{\hat{r}}(\bm{u})$ for $\bm{u} \in \mathcal{D}$ (evaluation oracle), (ii) an algorithm to compute a vector proportional to the gradient, i.e., $\alpha\nabla f_{\hat{r}}(\bm{u}),\ \alpha > 0$ (gradient oracle), and (iii) an algorithm to compute the intersection of a ray starting from $\bm{u}$ with $\partial \mathcal{D}$. These oracles will form the base for obtaining a performant method to solve \eqref{eq:plane_opt_problem}.

\paragraph{Evaluation oracle.}
Notice that $f_{\hat{r}}(\bm{u}),\ \bm{u}\in\mathcal{D}$ is given by the length\footnote{Again, notice the slight abuse of terminology here: the sign of "length" is inverted so that we have negative values. cf. the definition of problem \eqref{eq:f_definition}.} of the part of the ray $\ell(t) := \{ [\bm{u}, \hat{r}] - te_{n+1}\in\R^{n+1}\ |\ t \in \R_+ \}$ in the interior of $\mathcal{S}$. Thus, we consider the ray shooting defined by $\ell(t)$ and hitting the $\partial\mathcal{S}$ at the boundary point $\bm{s}\in\R^{n+1}$. Then, $f_{\hat{r}}(\bm{u})$ is equal to the length of the segment defined by the points $[\bm{u}, \hat{r}]$ and $\bm{s}$.

Finally, as Polyak et. al. show in ~\cite{Polyak2006}, this length is given by the smallest positive eigenvalue of the following Generalized Eigenvalue Problem (GEP): $F(\bm{\ell}(t))\bm{x} =0$. In our case, we can exploit the special structure of the LMI of \eqref{eq:original_sdp} and simplify this GEP as follows:
\begin{equation}
\label{eq:GEP_eval}
\begin{split}
& F(\bm{\ell}(t))\bm{x} =0 \Rightarrow [F(\bm{y}) - t(F(\bm{e}_{n+1}) - A_0)]\bm{x} = 0 \Rightarrow \\ & (F(\bm{y}) - tA_{n+1})\bm{x} =0 \Rightarrow F(\bm{y})\bm{x} - tA_{n+1}\bm{x} = 0 ,
\end{split}
\end{equation}
where $\bm{y} = [\bm{u}, \hat{r}]$. Clearly, the GEP in~(\ref{eq:GEP_eval}) has only one real (and bounded) eigenvalue, since only one element in $A_{n+1}$ is non-zero. Thus, we can solve the following GEP: $\lambda F(\bm{y})\bm{x} - A_{n+1}\bm{x} = 0.$ We then invert its single non-zero real eigenvalue to obtain the wanted eigenvalue of the GEP in~(\ref{eq:GEP_eval}). This GEP is equivalent to the following eigenvalue problem:
\begin{equation}\label{eq:regular_eig_problem}
F(\bm{y})^{-1}A_{n+1}\bm{x} - \lambda \bm{x} = 0 .
\end{equation}
The matrix $F(\bm{y})^{-1}A_{n+1}$ is a zero matrix except for its first column, which is equal to the solution of the following linear system:
\begin{equation}
\label{eq:linear_system_eval}
F(\bm{y})\bm{z} = A_{n+1}(:,1) \Rightarrow F(\bm{y})\bm{z} = \begin{bmatrix} 1 & 0 & \ldots & 0 \end{bmatrix}^T .
\end{equation}
Finally, the wanted eigenvalue is the leftmost top (first column - first row) element of the matrix $F(\bm{y})^{-1}A_{n+1}$ which is equal to the first element of vector $\bm{z}$. Summarizing, $f_{\hat{r}}(\bm{u}) = - 1/z_1$, where $1/z_1$ is the inverse of the first element of $\bm{z}$. To compute $\bm{z}$ we have to solve the linear system in~(\ref{eq:linear_system_eval}).

\begin{remark}
To evaluate the function $f_{\hat{r}}(\bm{u})$ at a point $\bm{u} \in \mathcal{D}$ it takes $O(n^{\omega})$ operations, where $\omega$ is the \emph{matrix multiplication constant}, $\omega \approx 2.37$.
\end{remark}

\paragraph{Gradient oracle.}

Recall that  $f_{\hat{r}}$ is a function on $\R^n$, while the boundary of $\mathcal{S}$ lies in $\R^{n+1}$. Also, recall that the graph of function $f_{\hat{r}}$ is exactly the boundary of $\mathcal{S}$ lying below the hyperplane $\mathcal{H}$. Thus, $\nabla f_{\hat{r}}(\bm{u})$  is equal to the normal vector of the tangent plane on the boundary point $[\bm{u}, \hat{r} + f_{\hat{r}}(\bm{u})]$, projected (orthogonally) down onto $\mathcal{D}$.

Since $\partial\mathcal{S}$ is given by $\text{det}F(\bm{y}) = 0,\ \bm{y}\in\R^{n+1}$, the gradient $\nabla f_{\hat{r}}(\bm{u})$ is the first $n$ coordinates of the gradient $\nabla \text{det}F([\bm{u}, \hat{r} + f_{\hat{r}}(\bm{u})])$. From \cite{Chalkis:2022} we get that,
\begin{equation}
\label{eq:gradient}
\begin{split}
& (\bm{x}^TA_1\bm{x}, \dots, \bm{x}^TA_{n+1}\bm{x})
= \alpha \nabla \text{det}F([\bm{u}, \hat{r} + f_{r^*}(\bm{u})]) = \\ & \alpha[\nabla f_{\hat{r}}(\bm{u}), \bm{x}^TA_{n+1}\bm{x}] ,\ \alpha>0 ,
\end{split}
\end{equation}
where $\bm{x}$ is any non-trivial vector from the kernel of $F([\bm{u}, \hat{r} + f_{\hat{r}}(\bm{u})])$. It turns out that the solution vector of \eqref{eq:linear_system_eval} satisfies this condition, as shown in   Lemma~\ref{lem:eigenvector}:

\begin{lemma}{}
\label{lem:eigenvector}
The solution vector $\bm{z} \in \R^{n+1}$ of the linear system in~(\ref{eq:linear_system_eval}) is a non-trivial vector from the kernel of $F([\bm{u}, \hat{r} + f_{\hat{r}}(\bm{u})])$.
\end{lemma}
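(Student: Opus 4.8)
The plan is to verify the claimed kernel membership by a short direct computation that exploits two facts: $F$ is affine in $\bm y$, and $A_{n+1}$ is the rank-one matrix $\bm{e}_1\bm{e}_1^\top$, where $\bm{e}_1$ denotes the first standard basis vector of $\R^{n+1}$ (its only nonzero entry is the $1$ in position $(1,1)$). The first step I would record is the elementary identity, valid for every $t\in\R$,
\[
F([\bm{u},\,\hat{r}+t]) = F([\bm{u},\hat{r}]) + t\,A_{n+1},
\]
which holds because $A_{n+1}$ is precisely the matrix coefficient of the last coordinate $y_{n+1}$ (the one identified with $r$); this is the same decomposition already used in \eqref{eq:GEP_eval}. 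Specializing to $t = f_{\hat{r}}(\bm{u})$ gives $F([\bm{u},\,\hat{r}+f_{\hat{r}}(\bm{u})]) = F([\bm{u},\hat{r}]) + f_{\hat{r}}(\bm{u})\,\bm{e}_1\bm{e}_1^\top$.

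Next I would apply this matrix to $\bm{z}$. By the defining linear system \eqref{eq:linear_system_eval} we have $F([\bm{u},\hat{r}])\,\bm{z} = A_{n+1}(:,1) = \bm{e}_1$, and $A_{n+1}\bm{z} = \bm{e}_1(\bm{e}_1^\top\bm{z}) = z_1\,\bm{e}_1$. Hence
\[
F([\bm{u},\,\hat{r}+f_{\hat{r}}(\bm{u})])\,\bm{z} = \bm{e}_1 + f_{\hat{r}}(\bm{u})\,z_1\,\bm{e}_1 = \bigl(1 + f_{\hat{r}}(\bm{u})\,z_1\bigr)\bm{e}_1 .
\]
Now I invoke the identity $f_{\hat{r}}(\bm{u}) = -1/z_1$ established in the derivation of the evaluation oracle (which in particular guarantees $z_1\neq 0$, since the ray $\ell(t)$ meets $\partial\mathcal{S}$ at a finite positive distance, so $f_{\hat{r}}(\bm{u})$ is finite and negative). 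Substituting yields $1 + f_{\hat{r}}(\bm{u})\,z_1 = 1 - 1 = 0$, so $F([\bm{u},\,\hat{r}+f_{\hat{r}}(\bm{u})])\,\bm{z} = \bm{0}$, i.e. $\bm{z}$ lies in the kernel. For non-triviality I would simply note that $\bm{z} = \bm{0}$ would force $\bm{e}_1 = F([\bm{u},\hat{r}])\,\bm{z} = \bm{0}$, a contradiction; hence $\bm{z}$ is a non-trivial kernel vector.

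I do not expect a genuine obstacle here. The only points requiring a little care are (i) making explicit that $A_{n+1} = \bm{e}_1\bm{e}_1^\top$ and that $F([\bm{u},\hat{r}])\,\bm{z} = \bm{e}_1$ \emph{exactly}, so that the two rank-one contributions cancel precisely; and (ii) carrying over the sign convention and the fact $z_1\neq 0$ from the evaluation-oracle computation where $f_{\hat{r}}(\bm{u}) = -1/z_1$ was derived. As a sanity check, note that $[\bm{u},\,\hat{r}+f_{\hat{r}}(\bm{u})]\in\partial\mathcal{S}$ by the definition \eqref{eq:f_definition} of $f_{\hat{r}}$, so the matrix $F([\bm{u},\,\hat{r}+f_{\hat{r}}(\bm{u})])$ is singular and necessarily has a non-trivial kernel; the lemma merely pins down $\bm{z}$ as an explicit member of it, which is exactly what makes the gradient formula \eqref{eq:gradient} computable from the same linear solve used for the evaluation oracle.
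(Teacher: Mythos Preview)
Your argument is correct. The direct computation---writing $F([\bm{u},\hat r+t])=F([\bm{u},\hat r])+t A_{n+1}$, applying it to $\bm z$, and using $F([\bm{u},\hat r])\bm z=\bm e_1$, $A_{n+1}\bm z=z_1\bm e_1$, and $f_{\hat r}(\bm u)=-1/z_1$---cleanly yields $F([\bm{u},\hat r+f_{\hat r}(\bm u)])\bm z=\bm 0$, and the non-triviality argument is fine.

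The paper reaches the same conclusion by a slightly more circuitous route: it first argues abstractly that \emph{any} eigenvector attached to the eigenvalue $f_{\hat r}(\bm u)$ of the GEP $F(\bm y)\bm x - tA_{n+1}\bm x=0$ lies in the kernel of $F([\bm{u},\hat r+f_{\hat r}(\bm u)])$ (this uses the same affine identity you invoke), and then, as a separate step, identifies $\bm z$ with that eigenvector by observing that the matrix $W=-F(\bm y)^{-1}A_{n+1}$ has $\bm z$ as its first column and zeros elsewhere, whence $W\bm z=z_1\bm z$. Your version collapses these two steps into a single three-line calculation and avoids the detour through the eigenstructure of $W$; what the paper's two-step presentation buys is an explicit link back to the GEP formulation in~\eqref{eq:GEP_eval} and~\eqref{eq:regular_eig_problem}, which motivates why one expects $\bm z$ to be the right vector in the first place.
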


\begin{proof}
Since $f_{\hat{r}}(\bm{u}) = 1/z_1$ is equal to the smallest positive eigenvalue of the GEP in~(\ref{eq:GEP_eval}) we have that,
\begin{equation}
\label{eigenvector_proof}
\begin{split}
& F(\bm{y})\bm{x} + f_{\hat{r}}(\bm{u})A_{n+1}\bm{x} = 0 \Rightarrow (F([\bm{u}, \hat{r}]) + f_{\hat{r}}(\bm{u})A_{n+1})\bm{x} = 0 \Rightarrow \\ & F([\bm{u}, \hat{r} + f_{\hat{r}}(\bm{u})])\bm{x} = 0 .
\end{split}
\end{equation}
Thus, the corresponding eigenvector of the eigenvalue $f_{\hat{r}}(\bm{u})$ is a non-trivial vector from the kernel of $F([\bm{u}, \hat{r} + f_{\hat{r}}(\bm{u})])$ and we can use it to compute the gradient in~(\ref{eq:gradient}).

Now we prove that this eigenvector is the solution vector $\bm{z}$ of the linear system in~(\ref{eq:linear_system_eval}).
According to~\eqref{eq:regular_eig_problem} the wanted eigenvector is identical to the eigenvector that corresponds to the sole non-zero eigenvalue of the matrix $W = -F(\bm{y})^{-1}A_{n+1}$. This eigenvalue is $z_1$, where $F(\bm{y})\bm{z} = -A_{n+1}(:,1)$. Now, notice that the first column of $W$ is equal to the vector $\bm{z}$, while the rest of its elements are equal to zero. Therefore, $W\bm{z} = z_1\bm{z}$ holds, and thus, the vector $\bm{z}$ is the eigenvector that corresponds to the eigenvalue $z_1$.
\end{proof}

\begin{remark}
Given the solution of the linear system in~(\ref{eq:linear_system_eval}), to compute a vector proportional to the gradient of $f_{\hat{r}}(\bm{u})$ takes $O(n)$ operations since each coordinate $\bm{x}^TA_i\bm{x},\ i\in[n]$ can be computed in $O(1)$ operations.
\end{remark}
One could also compute the actual $\nabla f_{\hat{r}}(\bm{u})$ following~\cite{Chalkis:2022}. However, that would be computationally more expensive, since it requires to compute all the eigenvalues of the matrix $F([\bm{u}, \hat{r} + f_{\hat{r}}(\bm{u})])$, while our method will not require to compute the actual gradient, see Section \ref{sub:fw_method}.

\paragraph{Boundary oracle.}
Let the ray $\ell(t) := \{[\bm{u}, \hat{r}] + t [\bm{d}, 0]\ |\ t\in\R_+\}$, where $\bm{d}\in\R^n$ gives the direction of the ray in $\mathcal{D}$.
To obtain the intersection $\ell(t) \cap \partial\mathcal{D}$, we need to compute the smallest positive eigenvalue $\lambda^*$ of the following GEP,

\begin{equation}
\label{eq:GEP_intersection}
\begin{split}
& F(\bm{\ell}(t))\bm{x} =0 \Rightarrow F([\bm{u}, \hat{r}] + t[\bm{d},0])\bm{x} = 0 \Rightarrow \\ & (F([\bm{u}, \hat{r}]) + t[F([\bm{d},0]) - A_0])\bm{x} = 0 \Rightarrow (C_1 + t C_2)\bm{x} = 0 ,
\end{split}
\end{equation}
where,
\begin{equation}
C_1 = \begin{bmatrix} \hat{r} & -(\bm{c} + \bm{u})^T / 2\\ -(\bm{c} + \bm{u}) / 2 & diag(\bm{u}) - Q \end{bmatrix},\
C_2 = \begin{bmatrix} 0 & \bm{d}^T / 2\\ \bm{d} / 2 & diag(\bm{d}) \end{bmatrix} .
\end{equation}
Then, the intersection point can be computed easily as, $[\bm{u}, \hat{r}] + \lambda^*[\bm{d}, 0] = \ell(t) \cap \partial\mathcal{S}$.

\begin{lemma}[From~\cite{Chalkis:2022}]
The arithmetic complexity to compute the intersection $\ell(t) \cap \partial\mathcal{S}$ up to precision $\epsilon>0$ is $O(n^\omega + n\log(1/\epsilon))$.
\end{lemma}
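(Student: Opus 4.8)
The plan is to reduce the task to isolating the smallest positive root of the univariate polynomial $\lambda \mapsto \det(C_1 + \lambda C_2)$, and then to show that this root can be bracketed by a bisection whose cost splits exactly as $O(n^\omega)$ of linear-algebra preprocessing plus $O(n)$ per bisection step, giving $O(n\log(1/\epsilon))$ for the search.

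First I would use that the boundary oracle is evaluated at a point $\bm u$ in the (relative) interior of $\mathcal{D}$, where $C_1 = F([\bm u,\hat r]) \succ 0$; a Cholesky factorization $C_1 = LL^\top$ then costs $O(n^\omega)$. Substituting $\bm x = L^{-\top}\bm y$ and left-multiplying by $L^{-1}$ turns the pencil $(C_1 + \lambda C_2)\bm x = 0$ into $(I + \lambda M)\bm y = 0$ with $M := L^{-1} C_2 L^{-\top}$ symmetric; forming $M$ costs a further $O(n^\omega)$. Since $C_1 + \lambda C_2$ is positive definite at $\lambda = 0$ and stays so until one of its eigenvalues first hits zero, the smallest positive eigenvalue $\lambda^*$ of the original pencil equals $\lambda^* = -1/\mu_{\min}(M)$ whenever $\mu_{\min}(M) < 0$; if $M \succeq 0$ the ray never meets $\partial\mathcal{S}$, which does not arise in our setting (where $\mathcal D$ is bounded) but is detected at no extra cost in any case.

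Next I would reduce the eigenvalue computation to tridiagonal form: reduction of $M$ to an orthogonally similar symmetric tridiagonal matrix $T$, with $\mu_{\min}(T) = \mu_{\min}(M)$, costs $O(n^\omega)$. For symmetric tridiagonal matrices the number of eigenvalues below a shift $\sigma$ is read off from a Sturm sequence (equivalently, the inertia of an $LDL^\top$ factorization of $T - \sigma I$) in $O(n)$ arithmetic operations. Hence, starting from an a-priori bracket $[-\beta,0]$ for $\mu_{\min}(T)$, where $\beta$ is a crude upper bound on $\|M\|$ computed from the data, bisection pins down $\mu_{\min}(T)$ --- and therefore $\lambda^* = -1/\mu_{\min}(T)$ and the intersection point $[\bm u,\hat r] + \lambda^*[\bm d,0]$ --- to accuracy $\epsilon$ in $O(\log(1/\epsilon))$ rounds, a short perturbation estimate relating the accuracy on $\mu_{\min}$ to that on $-1/\mu_{\min}$ and on the point, with the data-dependent factors contributing only lower-order additive logarithmic terms that are absorbed. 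Each round costs $O(n)$, so the search is $O(n\log(1/\epsilon))$, and together with the $O(n^\omega)$ preprocessing this yields the claimed bound.

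The routine algebra --- essentially the identity $\lambda^* = -1/\mu_{\min}(L^{-1} C_2 L^{-\top})$ --- follows the treatment of~\cite{Chalkis:2022}. The only genuinely delicate points are the accuracy bookkeeping in the last step (choosing the initial bracket and converting the target precision on the intersection point into a precision requirement on $\mu_{\min}(T)$), and the degenerate case in which the oracle is invoked on $\partial\mathcal{D}$ so that $C_1$ is merely positive semidefinite; the latter is handled either by a small perturbation of $\bm u$ toward the interior or by running the same reduction on the restriction of the pencil to $\operatorname{range}(C_1)$, and the former by the boundedness of $\mathcal D$ together with standard condition-number bounds on the data.
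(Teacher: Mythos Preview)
The paper does not supply its own proof of this lemma; it merely quotes the statement from~\cite{Chalkis:2022}. Your reconstruction follows exactly the route one expects from that reference: Cholesky-factor $C_1\succ 0$, reduce the pencil $(C_1+\lambda C_2)\bm x=0$ to the standard symmetric problem for $M=L^{-1}C_2L^{-\top}$, identify $\lambda^*=-1/\mu_{\min}(M)$, pass to a tridiagonal form, and isolate $\mu_{\min}$ by bisection using an $O(n)$ Sturm count per step. The algebra, the handling of the semidefinite boundary case, and the accuracy bookkeeping are all fine.

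The one step you should not gloss over is the claim that reducing the dense symmetric matrix $M$ to tridiagonal form costs $O(n^{\omega})$ arithmetic operations. Cholesky, inversion, and even the characteristic polynomial are all known to reduce to matrix multiplication, but orthogonal tridiagonalization via Householder is $\Theta(n^{3})$, and an $O(n^{\omega})$ reduction for $\omega<3$ is not a standard textbook result. Since this is precisely the preprocessing that buys you the $O(n)$ cost per bisection step --- and hence the \emph{additive} $n\log(1/\epsilon)$ term rather than a multiplicative one --- you should either cite a concrete result giving tridiagonalization (or an equivalent spectrum-preserving structured form) in $O(n^{\omega})$, or acknowledge that your argument as written delivers $O(n^{3}+n\log(1/\epsilon))$. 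Everything else matches the cited source.
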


In section~\ref{computational_study} we provide an efficient algorithm based on Lanczos method to compute the wanted eigenvalue regarding practical performance.

\subsection{Computing a feasible SDP point}
\label{sub:feasible_sdp_point}

Our methods require a starting feasible point of the convex problem~(\ref{eq:plane_opt_problem}) to start computations. Notice that this point would also be a feasible point of the SDP~(\ref{eq:original_sdp}).
To compute such a feasible point we first compute a trivial convexification vector, that is, $\overline{\bm{u}} = \lambda_{\max}(Q)\bm{1}_{n}$, where $\bm{1}_{n}$ is an $n$-dimensional vector with all coordinates equal to $1$. Since $diag(\overline{\bm{u}}) - Q \succeq 0$, we need to compute a scalar $\hat{r}$ such that $F([\overline{\bm{u}}, \hat{r}]) \succ 0$.

To this end, we set the initial $\bm{y}$ to be equal to $\overline{\bm{u}}$ in the corresponding coordinates, and leave the $(n+1)$th coordinate to any random value $r_d$. To compute a feasible scalar $\hat{r}$ we consider the intersection between the ray
$\bm{\ell}(t) = \{\bm{y} - t\bm{e}_{n+1} \} ,\ t\in\R_+$,
and the boundary of the feasible region $\mathcal{S}$. This is given by the smallest positive eigenvalue of the following GEP:
\begin{equation}
\begin{split}
& F(\bm{\ell}(t))\bm{x} = 0 \Rightarrow [F(\bm{y}) - t(F(\bm{e}_{n+1}) - A_0)]\bm{x} = 0  \Rightarrow F(\bm{y})\bm{x} - tA_{n+1}\bm{x} = 0 .
\end{split}
\end{equation}
Thus, following the methodology in Section~\ref{sec:function_oracles}, we solve the linear system in~(\ref{eq:linear_system_eval}) and invert the first coordinate of its solution $\bm{z}$. Summarizing, $\hat{r} = r_d - 1/z_1$, and since this value leads us to the boundary of the feasible region $\mathcal{S}$, we set $\hat{r} \leftarrow (r_d + 1/z_1)(1+\epsilon),\ \epsilon > 0$, to obtain a point in the interior. Notice that the approach above computes a feasible point of the SDP \eqref{eq:original_sdp} for any vector $\bm{u}\in\R^n$ s.t.\ $diag(\bm{u}) - Q \succeq 0$

\subsection{Solving the transformed problem}
\label{sub:fw_method}

In this section, we will present an algorithm to solve the transformed problem \eqref{eq:plane_opt_problem}. Observe that as this is a constrained convex optimization problem, any method targeting such problems would suffice to solve it, e.g., the constrained coordinate descend algoritm \cite{nesterov_coordinate_descent}, the Frank-Wolfe algorithm \cite{fw56}, etc. Both mentioned methods come with worst-case convergence guarantees, meaning that at convergence we would obtain the optimal solution of \eqref{eq:plane_opt_problem}. Depending on the application area, some methods might be more suitable than others. In this paper, we will showcase the proposed methods by solving the QUBO problem in a branch-and-bound scheme, where we solve \eqref{eq:plane_opt_problem} at each node of the search tree to obtain a valid bound for the node. Depending on the instance at hand, it can be beneficial for the algorithm to obtain \emph{weaker} bounds \emph{faster} at a given node and branch (hope here is that the children will be easier to solve), rather than spend a lot of time obtainin a strong bound for the node. Observe that by allowing the algorithm to use weaker but valid bounds in the nodes, it will still converge to global optimality as in the worst case it callapses to enumerating all possible solutions. In conclusion, for our case, the determining factor for performance of the overall algorithm will be the trade-off between the quality of the solution of \eqref{eq:plane_opt_problem} and the time to obtain it. The algorithm we present reflects this, as it does not necessarily produce an optimal solution of \eqref{eq:plane_opt_problem} and will stop early if it detects slow progress, allowing the branch-and-bound scheme to branch and look for easier subproblems.

Lastly, before presenting our algorithm, we mention that in the case of the constrained coordinate descent algorithm from \cite{nesterov_coordinate_descent}, we were able to design an algorithm  that optimally solves the problem \eqref{eq:plane_opt_problem} and has a better worst case complexity compared to the state-of-the-art SDP algortihm in~\cite{Lee-Song:2020} when it is applied to solve the QUBO's dual SDP relaxation of \eqref{eq:original_sdp} by exploiting the special structure of our problem, in a similar way we were able to simplify the function oracles presented in Section \ref{sec:function_oracles}. However, the resulting algorithm still significantly underperformed in our experiments compared to the algorithm presented in this Section. Similarly, for the Frank-Wolfe algorithm, the \emph{linear minimization oracle} is a SDP problem itself, making it too expensive for our setting (see \cite{Jaggi:2013} for details on the Frank-Wolfe algorithm). We do not further consider these methods for this conference submission due to space restrictions, but we plan on presenting the reductions and computational results for these methods as well in the extended version of this paper.

Our method is presented in Alg.~\ref{alg:descent_method}.
\begin{algorithm}[t]
  \caption{}
\label{alg:descent_method}
\begin{algorithmic}[1]
  \REQUIRE Point $[\bm{u}, \hat{r}] \in \mathcal{S}$, iteration limits $N,k_1,k_2 \in \mathbb{N}_+$.
  \ENSURE Upper bound $\hat{r} + f_{\hat{r}}(\hat{\bm{u}})$ for the \eqref{eq:qubo} where $\hat{\bm{u}}$ is a solution of \eqref{eq:plane_opt_problem}.
  \STATE $i\leftarrow 1;\ j\leftarrow 0$
  \WHILE{$i \leq N$}
    \STATE $d \leftarrow \nabla f_{\hat{r}}(\bm{u})\ /\ \| \nabla f_{\hat{r}}(\bm{u}) \|$ \label{normalized_gradient}
    \STATE $\bm{s} \leftarrow \bm{u} - t \nabla f_{\hat{r}}(\bm{u}),\ t= \sup\{ t\in\R_+\ |\ F([\bm{u}, \hat{r}] - t[\nabla f_{\hat{r}}(\bm{u}), 0]) \succeq 0 )\}$ \label{line_with_bnd_oracle}
    \STATE $q \leftarrow 1;$ $\bm{u}_{+} \leftarrow \bm{s};$ $\bm{u}_{-} \leftarrow \bm{u};$ $boundary \leftarrow \text{ true}$
    \WHILE{$q \leq k_1$}
      \STATE $\bar{\bm{u}} \leftarrow (\bm{u}_{+} + \bm{u}_{-}) / 2$
      \IF{$\langle \nabla f_{\hat{r}}(\bar{\bm{u}}), (\bm{u}_{+} - \bm{u}_{-}) \rangle > 0$} \label{line_with_binary_search_start}
        \STATE $\bm{u}_{+} \leftarrow \bar{\bm{u}}$; $boundary \leftarrow \text{ false}$
      \ELSE
        \STATE $\bm{u}_{-}\leftarrow \bar{\bm{u}}$
      \ENDIF  \label{line_with_binary_search_end}
      \STATE $q \leftarrow q + 1$
    \ENDWHILE
    \IF{$boundary$}
      \STATE $j \leftarrow j + 1$
      \IF{$j == k_2$} \label{boundary_stopping_criterion}
        \STATE \textit{break}
      \ENDIF
    \ENDIF
    \STATE $i \leftarrow i+1$
  \ENDWHILE
  \STATE $\hat{\bm{u}} \leftarrow \bm{u}$; $r \leftarrow \hat{r} + f_{\hat{r}}(\hat{\bm{u}})$
  \RETURN{$\hat{\bm{u}},\ r$}
  \end{algorithmic}
\end{algorithm}
It takes as input a feasible point of the problem \eqref{eq:plane_opt_problem} and outputs a valid upper bound for \eqref{eq:qubo}. It employs gradient directions to improve the solution in each iteration, with a limit of $N$ iterations. Its implementation requires the function oracles defined in Section~\ref{sec:function_oracles}. Simply, in each step, our method computes the normalized gradient at the current point $\bm{u}$, see Line \ref{normalized_gradient}. It shoots a ray starting from $\bm{u}$ towards the negative gradient direction and computes the boundary point $\bm{s}$ where the ray hits the boundary $\partial\mathcal{D}$, using the boundary oracle presented in Section~\ref{sec:function_oracles}, see Line \ref{line_with_bnd_oracle}.

Then, our method applies a search to approximate the best point according to $f_{\hat{r}}$ on the segment defined by $\bm{u}$ and $\bm{s}$ , i.e. it computes,
\begin{equation}
\bar{\bm{u}} := \argmin\limits_{\bm{x} \in \{t\bm{u} + (1-t)\bm{s}\}}f_{\hat{r}}(\bm{x}),\ 0 \leq t \leq 1 .
\end{equation}
To approximate $\bar{\bm{u}}$ we provide a bisection method applied on the segment $\{t\bm{u} + (1-t)\bm{s},\ 0 \leq t \leq 1$. Let $\bm{u}_{-},\ \bm{u}_{+} \in \mathcal{D}$ the extreme points of the segment in each iteration of the bisection method. First, we project the normalized gradient $\frac{\nabla f_{\hat{r}}(\bm{u})}{\| \nabla f_{\hat{r}}(\bm{u}) \|}$ onto the segment defined by the extreme points $\bm{u}_{-},\ \bm{u}_{+}$, say,
\begin{equation}
\bm{d} = \frac{\langle \nabla f_{\hat{r}}(\bm{u}), (\bm{u}_{+} - \bm{u}_{-}) \rangle}{\| \nabla f_{\hat{r}}(\bm{u}) \| \| \bm{u}_{+} - \bm{u}_{-} \|^2} (\bm{u}_{+} - \bm{u}_{-}) .
\end{equation}
Then, we employ the innner product $\langle \bm{d}, (\bm{u}_{+} - \bm{u}_{-}) \rangle$ to decide in which sub-interval between $\{\bm{u}_-,\bm{u}\}$ and $\{\bm{u},\bm{u}_+\}$ to continue searching. In particular, if $\langle \nabla f_{\hat{r}}(\bm{u}), (\bm{u}_{+} - \bm{u}_{-}) \rangle > 0$, we continue within $\{\bm{u}_-,\bm{u}\}$, otherwise we continue within $\{\bm{u},\bm{u}_+\}$, see Lines \ref{line_with_binary_search_start} through \ref{line_with_binary_search_end}. The bisection iteration limit $k_1$ allows us to stop when the search segments become small enough.

Lastly, in addition to the global iteration limit $N$, our method also features a stopping criterion tailored for the edge-case when it converges to a point on the boundary of $\mathcal{D}$. It will stop the algorithm if it detects that for $k_2$ consecutive iterations (search directions), the bisection method did not once move away from the boundary $\partial \mathcal{D}$ (the bisection method \emph{approximated} the boundary point $s$), see Line \ref{boundary_stopping_criterion}. This criterion prevents our method from getting stuck on the boundary $\partial\mathcal{D}$ as it terminates the method when the current iterate is very close to the boundary and the gradient direction can not move it away from it.

\section{Computational study}
\label{computational_study}

In this section, we present a computational analysis of Algorithm~\ref{alg:descent_method}.
First, we give some details on the computational setup, before we highlight the efficiency of Algorithm~\ref{alg:descent_method} to solve the SDP~\eqref{eq:original_sdp} when a warmstart is given. We also show that this is in contrast to the state-of-the-art SDP solvers \SDPA and \DSDP, which lack this feature.
After that, we integrate Algorithm~\ref{alg:descent_method} as a dual solver into a branch-and-bound framework to solve QUBOs to optimality and conduct a comparative analysis against both \SDPA and \DSDP as the dual solvers.

\subsection{General computational setup}
\label{sec:comp-setup}

All our experiments were carried out on a PC with a 11th Gen Intel® Core™ i7-1165G7 CPU with 8 cores, \SI{2.8}{\giga\hertz}, and \SI{32}{\giga\byte} RAM running \textsf{Ubuntu 20}. We implemented Algorithm~\ref{alg:descent_method} using \textsf{C++-11}.

In order to compute the smallest positive eigenvalue of the GEP in~\eqref{eq:GEP_intersection} we adopt the most efficient practical approach according to~\cite{Helmberg-et-al:1996,Chalkis:2022,Yamashita:2003}. That is, we compute the largest eigenvalue of the GEP: $(C_2 + t C_1)\bm{x} = 0$, and invert the computed eigenvalue. Since it holds that $C_1 \succeq 0$, one could transform this GEP into a regular symmetric eigenvalue problem by employing the cholesky factorization of $C_1 = L^TL$. Then, the GEP becomes equivalent to $(B - t I)\bm{x} = 0$, where $B=-L^{-T}C_2L^{-1}$. Since this is a symmetric eigenvalue problem, its maximum eigenvalue can be efficiently computed using a power method, \eg, via the Lanczos algorithm. With this approach the eigenvalue can be computed after $O(n^3)$ operations. In our implementation, we use the Lanczos algorithm implemented in the \textsf{C++} library \Spectra.
Further parameterizations and implementation details of Algorithm~\ref{alg:descent_method} are experiment-specific and are thus specified in the following two subsections.

For our computational comparisons, we also linked \DSDP~\cite{dsdp1,dsdp2,dsdp3} and \SDPA~7~\cite{Yamashita:2010} and used both with its default settings.

Overall, we used 129~instances from three publicly available instance collections for our evaluations.
We provide a reference per subset and details on the number of instances, number of variables, and densities in Table~\ref{tab:testset}.
\begin{table}[t]
  \centering
  \caption{References and sizes of our test set.}
  \label{tab:testset}
  \begin{tabular}{l*{5}{c}}
    \toprule
    test set & reference & \#instances & \#variables & density \\
    \midrule
    \textsf{Gset}   & \cite{Gset}  & 24  &  800 - 14000 & 0.00 - 0.06\\
    \textsf{BQP}    & \cite{Beasley:1998} & 60 & 50-2500 & 0.09 - 0.11 \\
    \textsf{BQPGKA}       & \cite{Glover:1998} & 45 & 20 - 500 & 0.07 - 0.99 \\
    \bottomrule
  \end{tabular}
\end{table}
The test set is diverse in terms of instance sizes and density.
While the \textsf{Gset} instances are rather large in terms of the number of variables, all instances are very sparse.
In contrast, the instances contained in \textsf{BQPGKA} are small to mid-sized but cover a wide range from sparse to very dense.
The \textsf{BQP} instances are sparse and range from small to large.
Note that we aimed for a test set that is balanced \wrt the number of solvable and unsolvable instances. This allows to analyze both runtimes to optimality and final relative gaps when running into the time limit.
Since all of the instances in \textsf{Gset} are challenging, we randomly selected a subset of 24~instances from the 71~instances contained in \textsf{Gset}.

\subsection{Warmstarting ability}%
\label{sec:warmstart_experiments}%
In this Section, we focus on showing the ability of our method to be warmstarted, in contrast to the interior point methods in \DSDP and \SDPA. To this end, we will intialize the three methods once with a solution far away from the optimum (\emph{coldstart}) and once with a solution close to the optimum (\emph{warmstart}), and we will record how the performance of the three methods changes.

For these experiments, we paremeterized Algorithm~\ref{alg:descent_method} as follows: we set $N=\num{50000}$, $k_1=5$, and $k_2=2$. For the coldstart, we initialize the methods with the vector $\bm{u}=c\lambda_{\max}(Q)\mathbf{1}_n$, where $c>1.05$ is a factor we compute using a binary search such that the relative gap of the objective value of the coldstart solution to the optimal objective value lies between $\si{85}{\%}$ and $\si{95}{\%}$. If the initial gap is larger than $\si{95}{\%}$ for $c=1.05$, we keep that solution. This is the case for all the instances in \textsf{Gset}.
For the warmstart, we perturb the optimal solution $\bm{u}^*$ by setting $\bm{u}_i = \bm{u}^*_i + w\rho$, where $\rho$ is a uniformly distributed number in $[w_1 10^{Y-1}, w_2 10^Y]$, $Y$ represents the order of magnitude of $\bm{u}^*_i$ and $w_1,w_2\in\R_+$ are factors such that the relative gap of the warmstart to the optimal objective value lies between $7\%$ and $8\%$.

Table~\ref{tab:warmstart_experiments} gives the arithmetic mean of the number of iterations and the runtime for each of the three methods for the instances from Table~\ref{tab:testset} and for both variants, cold- and warmstarted.
For Algorithm~\ref{alg:descent_method}, we also give the arithmetic mean of the relative gap of the output solution compared to the optimal solution. Note that \SDPA and \DSDP both compute an optimal solution up to some tolerance.
\begin{table}[t]
  \centering
  \caption{Computational results of the three methods for both a cold- and a warmstart of the SDP relaxation \eqref{eq:original_sdp} of the instances in Table~\ref{tab:testset}. \emph{StartGap} stands for the average relative gap to the optimal value of the starting point,
  \emph{Iters} for the average number of iterations until termination, \emph{Time} for the average runtime, and \emph{SolGap} for the arithmetic mean of the relative gap to the optimal value of the computed solution.}
  \label{tab:warmstart_experiments}
  \begin{tabular}{lrrrrrrr{c}}
    \toprule
    \multicolumn{2}{c}{}  & \multicolumn{3}{c}{Algorithm~\ref{alg:descent_method}} & \multicolumn{2}{c}{SDPA} & \multicolumn{2}{c}{DSDP} \\
    \cmidrule(lr){3-5}
    \cmidrule(lr){6-7}
    \cmidrule(lr){8-9}
                          & StartGap & Iters & Time & SolGap & Iters & Time & Iters & Time \\ \midrule

    \multirow{2}{*}{\textsf{BQPGKA}}  & 91.4$\%$ & 72.2 & 0.16 & 2.5$\%$ & 28.9 & 20.7  & 70.6 & 66.2 \\  
                          & 7.4$\%$ & 6.1 & 0.03 & 1.6$\%$ & 28.9 & 20.3  & 68.2 &  65.0 \\

    \multirow{2}{*}{\textsf{BQP}}  & 88.3$\%$ & 43.7 & 9.8 & 2.0$\%$ & 30.6   & 18.5 & 104.3 & 213.6 \\ 
                          &  7.8$\%$ &  4.9 & 1.1 & 2.4$\%$ & 30.6  & 18.5 & 67.0 & 202.5 \\

    \multirow{2}{*}{\textsf{Gset}} & 280$\%$ & 2.0 & 1.1 & 232$\%$ & 32.8 & 232.0  & 158.5 & 620.1 \\
                          & 7.5$\%$ & 2.0 & 1.2 & 2.3$\%$ & 32.7 & 229.1 & 158.8 & 615.0 \\
    \bottomrule
  \end{tabular}
\end{table}
It is noteworthy that both \SDPA and \DSDP require a similar number of iterations and runtime to solve the SDP relaxation no matter if the solve is cold- or warmstarted. This is expected, since both methods cannot exploit a warmstart. The only exception to this are the instances in \textsf{BQP}, for which \DSDP needs significantly less iterations when a warmstart is given. However, the runtime is improved only by around~$\SI{5}{\%}$. On the contrary, our method exhibits significantly fewer iterations and smaller runtimes when initiated with a warmstart, achieving better or similar relative accuracy. In all cases, our method terminates due to the boundary termination criterion of Line \ref{boundary_stopping_criterion} in Algorithm~\ref{alg:descent_method}, which indicates a stoppage when the method becomes confined to the boundary. Further, for all instances in \textsf{Gset}, Algorithm~\ref{alg:descent_method} consistently terminates after 2~iterations, independent of a cold- or a warmstart.
This consistent termination pattern suggests that the instances pose considerable challenges for our method.
However, when initialized with a warmstart, Algorithm~\ref{alg:descent_method} achieves an error comparable to that it achieves in the other two test sets. As expected, our method is not as accurate as \SDPA and \DSDP. Nevertheless, its ability to rapidly enhance the input solution and attain a valid QUBO bound suggests applicability in improving branch-and-bound methods for solving QUBO to optimality.

\subsection{Performance evaluation within a QUBO branch-and-bound approach}
\label{sec:comp-results-details}
In this section, we evaluate how our proposed method performs inside a QCR-based branch-and-bound approach for solving QUBOs to global optimality, \ie, we equip a branch-and-bound framework with Algorithm~\ref{alg:descent_method} to solve all bounding problems~\eqref{eq:original_sdp}.
In order to focus our analysis solely on the dual side, we implemented a branch-and-bound algorithm with the following properties:
\begin{itemize}
  \item We neither use presolve nor primal heuristics.
        Instead, we inject the best primal solution value known to us at the root node. Hence, at each node, only the dual solver is run.
  \item We use a basic but deterministic branching rule, which always branches on the free binary variable with
        the minimum index.
  \item We always select the problem with the weakest dual bound from the heap as the next node to be processed.
\end{itemize}
We realized this branch-and-bound implementation by using the \textsf{C++} branch-and-bound framework \PEBBL; see \cite{Eckstein-et-al:2015}.
For the branch-and-bound experiment, we paremeterized Algorithm~\ref{alg:descent_method} as in Section~\ref{sec:warmstart_experiments} except for the maximum number of iterations which we set to $N=5$.
The rationale is that due to the warmstarting ability of our method, in most cases a few iterations are enough to improve the dual bound enough. Preliminary experiments confirmed that this setting is the best trade-off between node throughput and solving each node to optimality to obtain a bound as tight as possible; see also the computational results in the previous subsection.
We also set the scale factor that is used to push a point on the boundary of the feasible region~$\mathcal{S}$ into the interior of $\mathcal{S}$ to $(1+\epsilon) = 1.1$; see also Section~\ref{sub:feasible_sdp_point}.
Since our method depends a lot on being warmstarted, we always solve the root node SDP with \SDPA, because it is faster than our method started from a coldstart. All subsequent bounding SDPs are solved by Algorithm~\ref{alg:descent_method}.
In each node, we project the SDP solution of the parent problem to the feasible space of the node problem to obtain a starting solution.

To the best of our knowledge, our method is the first that specifically targets the SDP~\eqref{eq:original_sdp}. Hence, as a reference for comparison, we adopt the approach taken by existing literature on solving QUBOs via the QCR method and use the same branch-and-bound framework as detailed above but equipped with a state-of-the-art, off-the-shelf SDP solver. We test the two widely used open-source solvers \DSDP and \SDPA, with default settings. In the following, we compare the three configurations on the instances as specified in Table~\ref{tab:testset}, each which a time limit of \SI{3600}{s}.

For 47 out of the 129~instances, all configuration find the optimal solution. For this subset of instances, we report shifted geometric means (with a shift of 10) of runtimes as well as minimum, maximum, mean, and median runtimes and branch-and-bound nodes in Table~\ref{tab:sgm-runtimes-solved-by-all}.
\begin{table}[t]
  \centering
  \caption{Shifted geometric means of runtimes, and statistics on runtimes and number of branch-and-bound nodes for the 47 instances that all configurations solve.}
  \label{tab:sgm-runtimes-solved-by-all}
  \resizebox{\textwidth}{!}{
  \begin{tabular}{lrrrrrrrrrr{c}}
    \toprule
    &  \multicolumn{2}{c}{shifted geo. means} & \multicolumn{4}{c}{runtimes in seconds} & \multicolumn{4}{c}{branch-and-bound nodes} \\
    \cmidrule(lr){2-3}
    \cmidrule(lr){4-7}
    \cmidrule(lr){8-11}
                        & unscaled  & scaled  & min. & max. & mean & median & min. & max. & mean & median \\
    \midrule
    \DSDP         & 69.52 & 5.00 & 0.31  & 3033.23 & 347.20 & 33.98 & 21 & 64205 & 7339.21 & 1871 \\
    \SDPA         & 40.71 & 2.93 & 0.11  & 3441.03 & 333.06 & 13.27 & 3 & 311547 & 14483.94 & 579 \\
    Algorithm~\ref{alg:descent_method}       & 13.89 & 1.00 & 0.06  & 829.92  & 53.90 &   2.54 & 47 & 445247 & 35557.34 & 3735 \\
    \bottomrule
  \end{tabular}}
\end{table}
We observe that \DSDP is entirely dominated by \SDPA, so we will not further comment on \DSDP in the following due to space restrictions of the conference submission.
Further, we observe that Algorithm~\ref{alg:descent_method} outperforms \SDPA significantly by a factor of 2.93 for the shifted geometric mean.
This is supported by the mean and median runtimes, which show speedups of around 6 and 5 times compared to \SDPA in the mean and median.
We also observe that Algorithm~\ref{alg:descent_method} requires more branch-and-bound nodes than the other two configurations. As discussed in Section~\ref{sub:fw_method}, this is due to the fact that our method focuses more on node throughput than on solving each node to optimality. This is even more pronounced due to the maximum number of iterations, which we set to $N=5$ in this experiment.

Apart from the instances that all configurations solve, there are eight additional instances that at least one solver solves.
There are three instances that are only solved by \SDPA and Algorithm~\ref{alg:descent_method} with mean runtimes of \SI{1368.62}{\second} and \SI{530.78}{\second}, respectively.
In comparison to \SDPA, our approach solves four additional instances with a mean runtime of \SI{453}{\second}. \SDPA reports a mean relative gap of \SI{1.89}{\percent} for these instances.
\SDPA also solves one instance (in \SI{569}{\second}) that our approach cannot solve. In fact, our solver runs into memory issues for this instance after \SI{1851}{\second} with a relative gap of \SI{2.54}{\percent}. Note that we treat this instance as it would have run into the time limit with the final relative gap of of \SI{2.54}{\percent} in our computational study.

The remaining 74 instances remain unsolved by all branch-and-bound configurations.
For these instances, we provide minimum, maximum, mean, and median relative gaps in Table~\ref{tab:gaps-solved-none}.
We observe that on this subset of instances, \SDPA and our method are quite comparable. While our method performs slightly better \wrt the mean, \SDPA performs marginally better in terms of median relative gaps.
\begin{table}[t]
  \centering
  \caption{Metrics based on relative gaps for the 74 instances, for which all configurations run into the time limit.}
  \label{tab:gaps-solved-none}
  \begin{tabular}{lrrrr{c}}
    \toprule
    & \multicolumn{4}{c}{relative gap in \%} \\
    \cmidrule(lr){2-5}
                         & minimum & maximum & mean & median \\
    \midrule
    \DSDP          & 1.84 & 180255.94& 5509.02 & 8.74 \\
    \SDPA          & 0.92 & 151.95 & 11.79 & 8.58 \\
    Algorithm~\ref{alg:descent_method}   & 1.79 &  64.94 & 10.82 & 8.89 \\
    \bottomrule
  \end{tabular}
\end{table}
Overall, the branch-and-bound configuration with our method as the dual solver can be considered the best performing configuration in our analysis. It solves the most instances and solves instances that all configurations solve significantly faster than the competing configurations.
For the instances for which all configurations run into the time limit, all configurations perform equally well.

In a full-length paper, we aim to further improve our method and to include a more detailed computational study.
Possible steps into this direction are to tailor subroutines of our method to sparse instances, perform a thorough parameter tuning of our method, and compare it to both other exact methods as well as general-purpose heuristics for solving the bounding SDPs.

\bibliographystyle{abbrv}
\bibliography{refs}
\end{document}